\documentclass[reqno, 11pt]{amsart}

\usepackage[T1]{fontenc}        
\usepackage{ae,aecompl}
\usepackage{enumerate}
\usepackage{latexsym}
\usepackage{amsmath,amsthm,amsfonts}
\usepackage{amssymb}
\usepackage{epic}
\usepackage{tikz}
\usetikzlibrary{calc}
\usepackage{graphicx}
\usepackage{hyperref} 
\usepackage{url}
\usepackage{pgfplots}
\usepackage{subcaption}
\usepackage{algorithm}
\usepackage[noend]{algpseudocode}
\usepackage{float}
\usepackage{multicol}
\usetikzlibrary{matrix,shapes,decorations.pathreplacing}
\usepackage{easybmat}
\usepackage{multirow,bigdelim}
\makeatletter
\@namedef{subjclassname@2020}{%
  \textup{2020} Mathematics Subject Classification}
\makeatother

\frenchspacing

\textwidth=13.5cm
\textheight=23cm
\parindent=16pt
\oddsidemargin=-0.5cm
\evensidemargin=-0.5cm
\topmargin=-0.5cm

\usepackage[backend=biber]{biblatex} 
\ExecuteBibliographyOptions{
sorting=nyt, 
bibwarn=true, 
}
\DeclareFieldFormat{pages}{#1}
\addbibresource{references-biblatex.bib} 

\title[Non-spherical sets versus lines in Euclidean Ramsey theory]{Non-spherical sets versus lines in Euclidean Ramsey theory}

\author[]{David Conlon}
\address{Department of Mathematics, California Institute of Technology, Pasadena, CA 91125, USA.}
\email{dconlon@caltech.edu}
\author[]{Jakob F\"uhrer}
\address{Institute of Analysis and Number Theory, Graz University of Technology,
Kopernikusgasse 24/II,
8010 Graz, Austria.}
\email{jakob.fuehrer@tugraz.at}

\date{}
\subjclass{05D10, 52C10}

\usepackage{geometry}

\usepackage{tikz-3dplot}
\usetikzlibrary{calc}

\begin{document}

 \maketitle
\newtheorem{theorem}{Theorem}
\newtheorem*{klar}{Klar}
\newtheorem{method}{Method}

\newtheorem{lemma}{Lemma}

\newtheorem{cor}{Corollary}

\newtheorem{conjecture}{Conjecture}

\theoremstyle{definition}
\newtheorem{defi}{Definition}

\newtheorem{bsp}{Beispiel}

\newtheorem*{bem}{Bemerkung}

\newtheorem*{vorschau}{Vorschau}

\newtheorem*{erg}{Ergänzung}

\theoremstyle{remark}
\newtheorem{remark}{Remark}

\newtheorem*{notation}{Notation}

\newtheorem{claim}{Claim}[theorem]
\renewcommand{\theclaim}{\arabic{claim}}
\newenvironment{proofofclaim}[1][\proofname\ of Claim \theclaim]{%
  \proof[#1]%
  \renewcommand\qedsymbol{$\blacksquare$}
}{\endproof}

\newcommand{\ndiv}{\not \hspace{3pt} \mid }

\newcommand*\hexbrace[2]{%
  \underset{#2}{\underbrace{\rule{#1}{0pt}}}}

\newcommand{\EE}{\mathbb{E}} 
\newcommand{\RR}{\mathbb{R}}

\begin{abstract}
We show that for every non-spherical set $X$ in $\mathbb{E}^d$, there exists a natural number $m$ and a red/blue-colouring of $\mathbb{E}^n$ for every $n$ such that there is no red copy of X and no blue progression of length $m$ with each consecutive point at distance $1$. This verifies a conjecture of Wu and the first author.
\end{abstract}


\section{Introduction}

Let $\mathbb{E}^n$ denote $n$-dimensional Euclidean space, that is, $\RR^n$ equipped with the Euclidean metric. Given finite sets $X_1, X_2, \dots, X_r \subset \EE^n$, we write $\mathbb{E}^n \rightarrow (X_1, X_2, \dots, X_r)$ if every $r$-colouring of $\EE^n$ contains a copy of $X_i$ in colour $i$ for some $i$, where a copy for us will always mean an isometric copy. Conversely, $\mathbb{E}^n \nrightarrow (X_1, X_2, \dots, X_r)$ means that there is some $r$-colouring of $\EE^n$ which does not contain a copy of $X_i$ in colour $i$ for any $i$. The Euclidean Ramsey problem, the study of which goes back to fundamental work of Erd\H{o}s, Graham, Montgomery, Rothschild, Spencer and Straus~\cite{EGMRSS1, EGMRSS2, EGMRSS3} in the 1970s, asks for a determination of those $X_1, X_2, \dots, X_r \subset \EE^n$ for which $\mathbb{E}^n \rightarrow (X_1, X_2, \dots, X_r)$.

In the particular case where $X_1 = X_2 = \dots = X_r = X$, we simply write $\mathbb{E}^n \rightarrow (X)_r$ to denote that every $r$-colouring of $\EE^n$ contains a monochromatic copy of $X$. Following Erd\H{o}s et al.~\cite{EGMRSS1}, we say that $X$ is \emph{Ramsey} if for every $r$ there exists $n$ such that $\mathbb{E}^n \rightarrow (X)_r$. The problem of determining those $X$ which are Ramsey is perhaps the most notorious question in this area and there are two rival conjectures for a characterisation. 

The first conjecture, already made by Erd\H{o}s et al.~in their first paper~\cite{EGMRSS1} on the subject, says that a finite set $X$ is Ramsey if and only if it is \emph{spherical}, meaning that it can be embedded in the surface of a sphere of some dimension. That being spherical is a necessary condition was already proved in~\cite{EGMRSS1} and subsequent results such as that of Frankl and R\"odl~\cite{FR} saying that all non-degenerate simplices are Ramsey and that of K\v r\'i\v z~\cite{K91} saying that regular polygons are Ramsey appear to add further weight.  

However, as pointed out by Leader, Russell and Walters~\cite{LRW}, all examples which are known to be Ramsey have the stronger property that they are \emph{subtransitive}, in the sense that they are subsets of finite sets which are transitive under the action of an appropriate group of isometries. This and other considerations then led them to make the rival conjecture that a finite set $X$ is Ramsey if and only if it is subtransitive. As there are finite sets which are spherical but not subtransitive (a non-obvious fact proved in~\cite{LRW2, LRW}), this is a strictly stronger conjecture, but, unlike the spherical sets conjecture, both directions of this conjecture remain open.

A result of Conlon and Fox~\cite{CF19} says that, under the axiom of choice, a finite set $X$ is Ramsey if and only if for every natural number $d$ and every fixed finite set $K \subset \mathbb{E}^d$, there exists $n$ such that $\mathbb{E}^n \rightarrow (X, K)$. That is, the problem of determining which sets are Ramsey can be recast in a somewhat simpler form. In~\cite{CW}, Conlon and Wu conjectured an even simpler characterisation, that a finite set $X$ is Ramsey if and only if for every natural number $m$, there exists $n$ such that $\mathbb{E}^n \rightarrow (X, \ell_m)$, where $\ell_m$ is the set consisting of $m$ points on a line with consecutive points at distance one. One direction of this conjecture, that if $X$ is Ramsey and $m$ is a natural number, then there exists $n$ such that $\mathbb{E}^n \rightarrow (X, \ell_m)$, follows from the result of Conlon and Fox (though the idea for this part of their result is essentially due to Szlam~\cite{Sz01}). However, the other direction remains open. Here we make some progress by proving the opposite direction for non-spherical sets. This verifies another conjecture made explicitly by Conlon and Wu~\cite{CW} and would settle their original conjecture in full if the spherical sets conjecture is true. 

\begin{theorem} \label{thm:main}
For every finite non-spherical set $X$, there exists a natural number $m$ such that $\mathbb{E}^n \nrightarrow (X, \ell_m)$ for all $n$.
\end{theorem}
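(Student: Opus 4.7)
My plan is to construct, for every $n$, a 2-colouring of $\mathbb{E}^n$ in which the red set is a union of thin spherical shells. The starting point is the classical criterion for non-sphericality: there exist real coefficients $c_1, \ldots, c_k$ with $\sum_i c_i = 0$, $\sum_i c_i x_i = 0$, and $c := \sum_i c_i \|x_i\|^2 \neq 0$. A direct substitution, using that any isometry of $\mathbb{E}^n$ is an orthogonal transformation followed by a translation, shows the identity $\sum_i c_i \|y_i\|^2 = c$ persists for every isometric copy $\{y_1, \ldots, y_k\}$ of $X$. I fix such a vector $(c_i)$, set $\delta = |c|/(2\sum_i |c_i|)$, and choose $L > 0$ large and of bounded irrationality type (for instance, $L = N\pi$ with $N$ a large integer). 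I then colour a point $x$ red when $\|x\|^2 \in \bigcup_{k \geq 0} [kL, kL + \delta)$ and blue otherwise.

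For the red side, given a putative red copy $\{y_i\}$, I write $\|y_i\|^2 = m_i L + t_i$ with $m_i \in \mathbb{Z}_{\geq 0}$ and $t_i \in [0, \delta)$. The identity rearranges to
\[
L \sum_i c_i m_i = c - \sum_i c_i t_i,
\]
so that $|L \sum_i c_i m_i - c| \leq \delta \sum_i |c_i| = |c|/2$. When $(c_i)$ may be chosen rational, clearing denominators makes $\sum_i c_i m_i$ take discrete values, and $L$ large forces $\sum_i c_i m_i = 0$, giving the contradiction $|\sum_i c_i t_i| = |c|$. The irrational case is more delicate---already for $X = \{0, 1, \sqrt{2}\} \subset \mathbb{E}^1$ the set $\{\sum_i c_i m_i : m_i \in \mathbb{Z}\}$ is dense in $\mathbb{R}$---and requires me to combine the linear relation with the geometric constraint $|m_i - m_j| = O(\sqrt{m_1/L})$, which follows from the fact that the pairwise distances $|y_i - y_j| = |x_i - x_j|$ are fixed. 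This constraint restricts the admissible index vectors $(m_1, \ldots, m_k)$ to a box of sidelength $O(\sqrt{m_1})$, and with a careful Diophantine choice of $L$, and possibly a multiscale variant of the coloring at different radii, one can arrange that no such tuple yields a valid approximation. This is the main technical obstacle of the proof.

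For the blue side, an $\ell_m$ parameterises as $\{p + jv : j = 0, \ldots, m-1\}$ with $\|v\| = 1$; setting $u = \langle p, v\rangle$ and $h^2 = \|p\|^2 - u^2$ gives $\|p + jv\|^2 = (j + u)^2 + h^2$, a quadratic polynomial in $j$ with leading coefficient $1$. A blue progression requires the sequence $\{((j+u)^2 + h^2)/L\}_{j=0}^{m-1}$ to avoid the interval $[0, \delta/L) \subset \mathbb{R}/\mathbb{Z}$. Since $1/L$ is irrational of finite type, the quantitative Weyl equidistribution theorem---via the Erd\H{o}s--Tur\'an inequality and Weyl's bound for quadratic exponential sums, which is uniform in the lower-order coefficients---yields a discrepancy of order $O(m^{-\sigma})$ uniformly in $u$ and $h$. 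Hence for $m$ sufficiently large depending only on $X$, the sequence must enter $[0, \delta/L)$, forcing a red point on the progression. The blue-side is essentially routine given the equidistribution theory, so the real work lies in the red-side analysis sketched above.
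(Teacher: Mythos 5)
Your blue-side argument is sound and essentially matches the paper's: parameterise $\ell_m$ so that the squared norms form a quadratic in $j$ with fixed leading coefficient, then use Erd\H{o}s--Tur\'an plus Weyl's quadratic exponential-sum bound (uniform in the lower-order coefficients) to force the sequence into the red window for $m$ large independently of the line. The problem is the red side, and you have correctly identified where it breaks --- but identifying the obstacle is not overcoming it, and as written the construction does not work for general non-spherical $X$. With a single modulus $L$ and the colouring ``red iff $\|x\|^2 \bmod L \in [0,\delta)$'', a red copy only has to satisfy $\bigl|L\sum_i c_i d_i - c\bigr| \le |c|/2$ with integer differences $d_i = m_i - m_1$ (using $\sum_i c_i = 0$). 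When the $c_i$ are incommensurable, as in your own example $\{0,1,\sqrt 2\}$, the set $\{\sum_i c_i d_i : d \in \mathbb{Z}^k\}$ is dense, so this inequality has integer solutions no matter how $L$ is chosen. The geometric constraint $|d_i| = O(\sqrt{m_1/L})$ does not rescue you: $m_1$ is unbounded, so for points far from the origin arbitrarily large $d_i$ are available and the density argument applies; since in high dimensions copies of $X$ with (essentially) prescribed large radii exist whenever the one quadratic relation is satisfied, the colouring will actually contain red copies. The phrases ``careful Diophantine choice of $L$'' and ``multiscale variant'' are exactly the missing proof, and no single congruence condition on $\|x\|^2$ can supply it.

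The paper's construction shows what extra idea is needed: after rescaling so that $1 \notin \langle c_1,\dots,c_s\rangle_{\mathbb{Q}}$, write the coefficients over a $\mathbb{Q}$-basis, $c_j = \sum_{k=1}^r q_{j,k} b_k$ with the $q_{j,k}$ integral, and impose $r$ \emph{simultaneous} conditions: $x$ is red iff $\lfloor a_k \|x\|^2\rfloor \equiv 0 \pmod p$ for every $k$, where $a_k = Mb_k$ and $p$ is a large prime. Because the coordinates $q_{j,k}$ are integers, the red-side contradiction becomes pure integer arithmetic modulo $p$ (the relevant sum is a nonzero integer smaller than $p$ that would have to be divisible by $p$), which is immune to the density phenomenon that kills the one-dimensional shell construction; the normalisation $1 \notin \langle c_i\rangle_{\mathbb{Q}}$ guarantees every nonzero integer combination $\sum_k h_k a_k$ is irrational, which is precisely what the blue side then needs to run Weyl equidistribution for the $r$-dimensional sequence $\bigl(a_1\|w_j\|^2/p,\dots,a_r\|w_j\|^2/p\bigr)$ via the Erd\H{o}s--Tur\'an--Koksma inequality. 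So your outline is correct only in the commensurable case; the general case requires replacing the single shell condition by this basis-indexed family of congruences (or some equivalent device), and that is the substantive content your proposal leaves unproved.
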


The main result of~\cite{CW} was a proof of this conjecture in the particular case where $X$ is taken to be $\ell_3$, the simplest non-spherical set, which already answered a question raised independently by Conlon and Fox~\cite{CF19} and by Arman and Tsaturian~\cite{AT}. Their proof is probabilistic and shows that one may take $m \leq 10^{50}$. Through more explicit constructions, this bound has subsequently been improved, first by F\"uhrer and Toth~\cite{FT} to $m \leq 1177$ and then by Currier, Moore and Yip~\cite{CMY} to $m \le 20$. Both of these papers also proved certain further special cases of Theorem~\ref{thm:main}, though it remained wide open in full generality. Our construction here is again explicit, but the proof that it works makes use of some tools on equidistribution, namely, Weyl's equidistribution theorem and the Erd\H{o}s--Tur\'an--Koksma inequality.

\section{Proof of Theorem~\ref{thm:main}}

\subsection{The construction}

By a result of Erd\H{o}s et al~\cite[Lemma 14]{EGMRSS1}, there exist $c_1, \dots, c_s \in \RR$ and $B > 0$ such that every copy $\{x_1, \dots, x_s\}$ of the non-spherical configuration $X$ satisfies $\sum_{j=1}^s c_j |x_j|^2=B$. 
Without loss of generality, we can assume that $1\not\in \langle c_1,...,c_s \rangle_{\mathbb{Q}}$, as otherwise we can rescale the equation by a factor $\mu\not\in \langle c_1,...,c_s \rangle_{\mathbb{Q}}$.
Now let $b_1,...,b_r$ be a $\mathbb{Q}$-basis for $\langle c_1,...,c_s \rangle_{\mathbb{Q}}$ and let $q_{j,k}\in\mathbb{Q}$ be such that $c_j=\sum_{k=1}^rq_{j,k}b_k$. Let $M\in\mathbb{N}$ be such that $B':=MB>\sum_{j=1}^s\sum_{k=1}^r|q_{j,k}|$ and let $a_j:=Mb_j$. We may then recast the equation for copies of $X$ as 
\begin{equation}
\label{non-spherical_eq}
\sum_{j=1}^s\sum_{k=1}^rq_{j,k}a_k|x_j|^2=B'.
\end{equation}
We can also assume that all the $q_{j,k}$ are integral, as otherwise we can multiply the equation by their least common multiple. Let $p$ be a prime with $p>2B'$.
We now colour each point $x\in\mathbb{E}^n$ red if $\lfloor a_k|x|^2\rfloor \equiv 0$ (mod $p$) for all $k\in[1,r]$ and blue otherwise.

\subsection{No red copy of $X$}

Assume that $x_1,...,x_s$ are red points satisfying (\ref{non-spherical_eq}). Then 
$$\sum_{j=1}^s\sum_{k=1}^rq_{j,k}\lfloor a_k|x_j|^2\rfloor \equiv 0 \text{\;\;(mod $p$)}.$$ 
On the other hand,
\begin{equation*}
\left| \sum_{j=1}^s\sum_{k=1}^rq_{j,k}a_k|x_j|^2-\sum_{j=1}^s\sum_{k=1}^rq_{j,k}\lfloor a_k|x_j|^2\rfloor \right|< \sum_{j=1}^s\sum_{k=1}^r|q_{j,k}| <B'
\end{equation*} and therefore $$\sum_{j=1}^s\sum_{k=1}^rq_{j,k}\lfloor a_k|x_j|^2\rfloor\in (0,2B')\subseteq (0,p), $$ which is a contradiction.

\subsection{No blue copy of $\ell_m$}

Let $L = \{w_1, \dots, w_m\}$ be a copy of $\ell_m$. It was shown in~\cite[Section~3]{CW} that there exist $\beta, \gamma \in \RR$ depending on the choice of $L$ such that $y_j = |w_j|^2$ can be written in the form $y_j:=j^2+\beta j+\gamma$ for all $j = 1, \dots, m$. 
Consider the sequence 
$$Z:=(z_j)_{j\in[m]}:=\left(\left(\frac{a_1y_j}{p},...,\frac{a_ry_j}{p}\right)\right)_{j\in[m]}$$ in $(\mathbb{R}/\mathbb{Z})^r$. For $m$ sufficiently large and, crucially, independent of the choice of $\beta$ and $\gamma$, we will show that $\{z_j\}_{j\in[m]}\cap [0,1/p)^r\neq\emptyset$, which implies that there is no blue copy of $\ell_m$ in our construction.

Let $D_m(Z)$ be the discrepancy of $Z$ in $(\mathbb{R}/\mathbb{Z})^r$, the supremum over all axis-aligned boxes $B = \prod_{i=1}^r [a_i, b_i)$ of 
$$\left|\frac{A(B;Z)}{m} - \mu(B)\right|,$$
where $A(B; Z)$ counts the number of points of $Z$ in $B$ and $\mu(\cdot)$ is the Lebesgue measure on $(\mathbb{R}/\mathbb{Z})^r$. The key claim is as follows.

\begin{lemma} \label{disc}
$$D_m(Z)< \frac{1}{p^r}.$$ 
In particular, $\{z_j\}_{j\in[m]}\cap [0,1/p)^r\neq\emptyset$.
\end{lemma}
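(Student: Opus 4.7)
The plan is to combine the Erd\H{o}s--Tur\'an--Koksma inequality with Weyl's equidistribution theorem for quadratic sequences. The former reduces bounding $D_m(Z)$ to estimating, for integer vectors $h\in\mathbb{Z}^r$ with $0<\|h\|_\infty\le H$, the Weyl-type sums
$$S_m(h):=\frac{1}{m}\sum_{j=1}^{m}e^{2\pi i\langle h,z_j\rangle}=\frac{1}{m}\sum_{j=1}^{m}e^{2\pi i\alpha_h(j^2+\beta j+\gamma)},\qquad \alpha_h:=\frac{1}{p}\sum_{k=1}^{r}h_k a_k,$$
since $\langle h,z_j\rangle=\alpha_h y_j$ and $y_j=j^2+\beta j+\gamma$. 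Explicitly, Erd\H{o}s--Tur\'an--Koksma yields a bound of the shape $D_m(Z)\le C_r\bigl(\tfrac{1}{H+1}+\sum_{0<\|h\|_\infty\le H}\tfrac{1}{\bar r(h)}|S_m(h)|\bigr)$ with $\bar r(h)=\prod_k\max(1,|h_k|)$.

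The first step is to verify that $\alpha_h$ is irrational for every nonzero $h$. Since $a_k=Mb_k$ with $b_1,\dots,b_r$ a $\mathbb{Q}$-basis of $\langle c_1,\dots,c_s\rangle_{\mathbb{Q}}$, and the initial rescaling ensures $1\notin\langle b_1,\dots,b_r\rangle_{\mathbb{Q}}$, the set $\{1,b_1,\dots,b_r\}$ is $\mathbb{Q}$-linearly independent. Hence $\sum_k h_kb_k$ cannot be rational when $h\ne 0$, and so $\alpha_h\notin\mathbb{Q}$.

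The crucial step is a bound on $|S_m(h)|$ that is uniform in $\beta,\gamma$; uniformity is essential because the threshold $m_0$ must be fixed before the copy $L$ of $\ell_m$ is known. A single van der Corput squaring gives
$$|mS_m(h)|^2\le \frac{m+H'-1}{H'}\Bigl(m+2\sum_{k=1}^{H'-1}\min\bigl(m,\tfrac{1}{2}\|2\alpha_h k\|^{-1}\bigr)\Bigr),$$
since in $P(j+k)-P(j)=-2\alpha_h kj-\alpha_h k^2-\beta k$ the $\beta$- and $\gamma$-dependent contributions are constant in $j$ and disappear in absolute value when the inner sum is estimated as a geometric progression. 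Because $\alpha_h$ is irrational, each $\|2\alpha_h k\|$ is positive, and letting $H',m\to\infty$ appropriately forces $S_m(h)\to0$, uniformly in $\beta,\gamma$.

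To conclude, I would choose $H$ so that $C_r/(H+1)<1/(2p^r)$, then choose $m_0$ so that for $m\ge m_0$ each of the finitely many $|S_m(h)|$ with $0<\|h\|_\infty\le H$ is small enough that the weighted sum in the Erd\H{o}s--Tur\'an--Koksma bound falls below $1/(2p^r)$; both choices depend only on $r$, $p$, and the $a_k$, not on $\beta,\gamma$. This yields $D_m(Z)<1/p^r$. The ``in particular'' clause is then immediate: the box $[0,1/p)^r$ has Lebesgue measure $1/p^r$, so if $Z$ missed it entirely we would have $|A(B;Z)/m-\mu(B)|=1/p^r$, contradicting the discrepancy bound. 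The main obstacle is precisely the uniformity in $\beta,\gamma$, which is handled cleanly by the van der Corput step stripping the lower-order coefficients in absolute value.
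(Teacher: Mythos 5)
Your proposal is correct and takes essentially the same route as the paper: the Erd\H{o}s--Tur\'an--Koksma inequality combined with a bound on the quadratic exponential sums $\sum_j e(\langle h,z_j\rangle)$ that is uniform in $\beta$ and $\gamma$ (with the leading coefficient $\frac{1}{p}\sum_k h_k a_k$ irrational by the $\mathbb{Q}$-independence of $1,b_1,\dots,b_r$), followed by choosing the truncation parameter and then $m$ so that each contribution is below $\frac{1}{2p^r}$. The only difference is that you establish the uniform Weyl-type estimate directly via van der Corput differencing, whereas the paper cites Weyl's theorem with the uniformity in the lower-order coefficients built in; the small sign/coefficient slips in your differenced polynomial are immaterial since only the linear-in-$j$ term matters.
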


In order to prove this, we make use of the Erd\H{o}s--Tur\'an--Koksma inequality~\cite{Kok}.

\begin{lemma}[Erd\H{o}s--Tur\'an--Koksma] \label{ETK}
For every positive integer $N$,
$$D_m(Z)\leq C_r\left(\frac{1}{N} +\sum_{1 \leq ||h||_\infty \leq N} \frac{1}{c(h)}\left| \frac{1}{m} \sum_{j=1}^m e(\langle h,z_j\rangle)  \right|  \right),$$
where $c(h) = \prod_{i=1}^r \max\{1, |h_i|\}$ for $h = (h_1, \dots, h_r) \in \mathbb{Z}^r$ and $e(x)=\exp(2\pi i x)$.
\end{lemma}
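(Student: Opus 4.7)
The plan is to prove the Erd\H{o}s--Tur\'an--Koksma inequality by the classical route of sandwiching the indicator function of an axis-aligned box between trigonometric polynomials of bounded degree. The starting ingredient is the one-dimensional Vaaly/Selberg construction: for each interval $[a,b)\subset\mathbb{R}/\mathbb{Z}$ I would produce trigonometric polynomials $F^{\pm}_N$ of degree at most $N$ satisfying $F^{-}_N \le \chi_{[a,b)} \le F^{+}_N$ pointwise, with
$$\int_{\mathbb{R}/\mathbb{Z}}\bigl(F^{+}_N - F^{-}_N\bigr)\,dx \;\ll\; \frac{1}{N}, \qquad \bigl|\widehat{F^{\pm}_N}(h)\bigr| \;\le\; \min\!\left(b-a+\tfrac{1}{N+1},\;\tfrac{1}{\pi|h|}\right) \text{ for } h\neq 0.$$
The point of these bounds is that the first controls the $L^1$-error of the sandwich, while the second produces the characteristic weight $1/c(h)$ after tensorisation.

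Next I would pass to $(\mathbb{R}/\mathbb{Z})^r$ by setting $F^{\pm}(x) := \prod_{i=1}^r F^{\pm}_{N,i}(x_i)$, where each $F^{\pm}_{N,i}$ is the one-dimensional majorant/minorant for $[a_i,b_i)$. A routine expansion shows that $F^{-} \le \chi_B \le F^{+}$ and $\int (F^{+}-F^{-})\,d\mu \ll_r 1/N$. The Fourier coefficients of $F^{\pm}$ are the corresponding products of one-dimensional coefficients, so $|\widehat{F^{\pm}}(h)| \le C_r/c(h)$ for $h\neq 0$, while $\widehat{F^{\pm}}(0)$ differs from $\mu(B)$ by $O_r(1/N)$.

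The final step is to expand the Riemann-type sum in a Fourier series and estimate. From $\chi_B \le F^{+}$,
$$\frac{A(B;Z)}{m} - \mu(B) \;\le\; \frac{1}{m}\sum_{j=1}^m F^{+}(z_j) - \mu(B) \;=\; \bigl(\widehat{F^{+}}(0) - \mu(B)\bigr) + \!\!\sum_{0 < \|h\|_\infty \le N}\!\! \widehat{F^{+}}(h)\,\frac{1}{m}\sum_{j=1}^m e(\langle h, z_j\rangle),$$
and symmetrically with $F^{-}$ for the lower bound, so that after the triangle inequality and the Fourier bounds above,
$$\left|\frac{A(B;Z)}{m} - \mu(B)\right| \;\le\; \frac{C_r}{N} + C_r\!\!\sum_{1 \le \|h\|_\infty \le N}\!\!\frac{1}{c(h)}\left|\frac{1}{m}\sum_{j=1}^m e(\langle h, z_j\rangle)\right|.$$
Taking the supremum over all boxes $B$ and absorbing constants into $C_r$ yields the stated inequality.

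The main technical obstacle is the Vaaly/Selberg construction of the one-dimensional majorants/minorants with simultaneous control of the $L^1$-deficit ($\ll 1/N$) and of the individual Fourier coefficients ($\ll 1/|h|$); this is classical but nontrivial, and really the substance of the proof. Once that step is granted, the tensorisation and the Fourier bookkeeping above are essentially routine, and the resulting constant $C_r$ depends only on $r$.
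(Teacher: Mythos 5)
The paper gives no proof of this lemma at all --- it is quoted as a classical result with a citation to Koksma --- so there is nothing internal to compare against; your proposal must stand on its own. The route you choose (Selberg--Vaaler majorants and minorants, tensorised to boxes, then Fourier expansion) is indeed the standard textbook proof, and both the one-dimensional input and the final Fourier bookkeeping are fine. There is, however, a genuine gap at the tensorisation step: the claim that ``a routine expansion shows that $F^- \le \chi_B$'' for $F^-(x) = \prod_{i=1}^r F^-_{N,i}(x_i)$ is false. Each one-dimensional minorant $F^-_{N,i}$ necessarily takes strictly negative values somewhere (it is a nonzero trigonometric polynomial lying below $\chi_{[a_i,b_i)}$, hence $\le 0$ off the interval, and it cannot vanish on a set of positive measure since its integral is positive). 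So at a point $x$ where, say, exactly two coordinates lie outside their intervals and the corresponding $F^-_{N,i}(x_i)$ are negative, the product is strictly positive while $\chi_B(x)=0$. The majorant direction is safe because all factors there are nonnegative, but the naive product of minorants is not a minorant, and without it your lower bound on $A(B;Z)/m - \mu(B)$ is unproved.

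The standard repair is to replace the product by
$$F^- := \prod_{i=1}^r F^+_{N,i} - \sum_{j=1}^r \bigl(F^+_{N,j} - F^-_{N,j}\bigr) \prod_{i \ne j} F^+_{N,i},$$
which satisfies $F^- \le \chi_B$: telescope
$$\prod_{i=1}^r F^+_{N,i} - \prod_{i=1}^r \chi_i = \sum_{j=1}^r \Bigl(\prod_{i<j}\chi_i\Bigr)\bigl(F^+_{N,j}-\chi_j\bigr)\Bigl(\prod_{i>j}F^+_{N,i}\Bigr)$$
and bound each summand by $\bigl(F^+_{N,j}-F^-_{N,j}\bigr)\prod_{i\ne j}F^+_{N,i}$ using the nonnegativity of every factor and $\chi_i \le F^+_{N,i}$. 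This corrected $F^-$ is still a trigonometric polynomial of degree at most $N$ in each variable, its Fourier coefficients are still $O_r(1/c(h))$ (each of the $r+1$ terms is a product of one-dimensional coefficients), and its mean value is $\mu(B) - O_r(1/N)$ because the variables separate in each integral. With this substitution your argument goes through and yields the stated inequality; as written, the minorant half of the sandwich is broken. (A small side point: the construction you want is due to Selberg and Vaaler, not ``Vaaly''.)
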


To estimate the $\sum_{j=1}^m e(\langle h,z_j\rangle)$ term, we use the following special case of Weyl's equidistribution theorem~\cite[Satz 9]{Weyl}). 

\begin{lemma}[Weyl] \label{Weyl}
Let $P(x)= a x^2 + b x + c \in\mathbb{R}[x]$ 
be a quadratic polynomial with irrational leading coefficient $a$. 
Then
$$\left|\sum_{j=1}^m e(P(j))  \right|=o_{a}(m),$$ 
where the $o$ term does not depend on $b$ or $c$.
\end{lemma}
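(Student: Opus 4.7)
The plan is to prove this via a standard Weyl differencing argument, reducing the estimation of our quadratic exponential sum to the estimation of linear ones. The van der Corput inequality gives, for every integer $1\leq H\leq m$,
\[
\Bigl|\sum_{j=1}^m e(P(j))\Bigr|^2 \;\leq\; \frac{2m^2}{H} + \frac{4m}{H}\sum_{h=1}^{H-1}\Bigl|\sum_{j=1}^{m-h} e\bigl(P(j+h)-P(j)\bigr)\Bigr|.
\]

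The key algebraic observation is that for $P(x)=ax^2+bx+c$,
\[
P(j+h)-P(j) \;=\; 2ah\,j + (ah^2 + bh),
\]
which is linear in $j$, with leading coefficient $2ah$ depending only on $a$ and $h$. Notice that $c$ has dropped out entirely, while $b$ survives only in the constant term of this linear polynomial. Combining this with the standard geometric-series bound $\bigl|\sum_{j=1}^{N} e(\alpha j+\beta)\bigr|\leq \min\bigl(N,(2\|\alpha\|)^{-1}\bigr)$, which is independent of $\beta$, produces
\[
\Bigl|\sum_{j=1}^m e(P(j))\Bigr|^2 \;\leq\; \frac{2m^2}{H} + \frac{4m}{H}\sum_{h=1}^{H-1}\min\!\Bigl(m,\tfrac{1}{2\|2ah\|}\Bigr),
\]
and this bound depends only on $a$ and $H$ --- not on $b$ or $c$.

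To conclude, I would exchange the limits in $H$ and $m$. Since $a$ is irrational, $\|2ah\|>0$ for every $h\geq 1$, so for each fixed $H$ the quantity $C_{a,H}:=\sum_{h=1}^{H-1}(2\|2ah\|)^{-1}$ is a finite constant. Thus, for each fixed $H$,
\[
\limsup_{m\to\infty} \tfrac{1}{m^2}\Bigl|\sum_{j=1}^m e(P(j))\Bigr|^2 \;\leq\; \tfrac{2}{H},
\]
and letting $H\to\infty$ afterwards yields $\bigl|\sum_{j=1}^m e(P(j))\bigr|=o_a(m)$. The main subtlety worth flagging is the order of limits: one must fix $H$ first (depending on $a$) and only then let $m\to\infty$, because $C_{a,H}$ can grow with $H$ at a rate governed by the Diophantine properties of $a$. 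Since the lemma asks only for qualitative $o(m)$ decay, no quantitative control of $C_{a,H}$ is needed, and the uniformity in $b$ and $c$ is preserved throughout the argument because these parameters appear only inside constant terms of the auxiliary linear sums, which are discarded upon taking absolute values.
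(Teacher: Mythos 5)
Your proof is correct. The paper does not actually prove this lemma; it simply cites it as a special case of Weyl's equidistribution theorem (Satz 9 of Weyl's paper), so there is no in-paper argument to compare against. What you have written is essentially Weyl's original method made self-contained: van der Corput differencing to reduce the quadratic sum to linear sums with leading coefficients $2ah$, the geometric-series bound $\min(N,(2\|\alpha\|)^{-1})$, and then the exchange of limits (fix $H$ depending on $a$, let $m\to\infty$, then let $H\to\infty$). You correctly identify and handle the one point that actually matters for the application in the paper, namely that the final bound depends only on $a$, $H$ and $m$ and not on $b$ or $c$, since $c$ cancels in the difference $P(j+h)-P(j)$ and $b$ survives only in the constant term of a linear phase, which the geometric-series bound ignores. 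Your remark about the order of limits --- that $C_{a,H}$ may grow with $H$ according to the Diophantine type of $a$, so one cannot let $H$ and $m$ tend to infinity simultaneously without further information --- is exactly the right caveat and explains why only a qualitative $o_a(m)$ is claimed. The only cosmetic quibble is that the constants $2m^2/H$ and $4m/H$ in your van der Corput inequality should be checked against whichever reference you cite (they are a valid, slightly lossy form of the standard inequality for $H\leq m$), but this has no bearing on the validity of the argument.
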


Note now that $\langle h,z_j\rangle$ describes a quadratic polynomial in $\mathbb{R}[j]$ with irrational leading coefficient $\sum_{k=1}^r h_{k}a_{k}$. Therefore, we have the following immediate corollary of Lemma~\ref{Weyl}.

\begin{cor}
\label{cor}
$$\left|\sum_{j=1}^m e(\langle h,z_j\rangle)  \right|=o_h(m).$$
\end{cor}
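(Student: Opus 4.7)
The plan is to check that $\langle h, z_j\rangle$, viewed as a function of $j$, is a quadratic polynomial with irrational leading coefficient, so that Lemma~\ref{Weyl} applies term by term in the Erd\H{o}s--Tur\'an--Koksma sum.

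First I would unfold the inner product: since $z_j = (a_1 y_j/p, \dots, a_r y_j/p)$,
$$\langle h, z_j\rangle = \frac{1}{p}\Bigl(\sum_{k=1}^r h_k a_k\Bigr) y_j = \frac{1}{p}\Bigl(\sum_{k=1}^r h_k a_k\Bigr)(j^2 + \beta j + \gamma),$$
which is a polynomial $P(j) = Aj^2 + B_1 j + C_1$ with leading coefficient $A = \frac{1}{p}\sum_{k=1}^r h_k a_k$ and with $B_1, C_1$ depending on $\beta$ and $\gamma$.

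The key step is to verify that $A$ is irrational for every nonzero $h \in \mathbb{Z}^r$. Since $a_k = M b_k$, it suffices to show $\sigma := \sum_{k=1}^r h_k b_k$ is irrational. The $b_k$ form a $\mathbb{Q}$-basis, so $\sigma \neq 0$. If $\sigma$ were a nonzero rational $q$, then $1 = \sum_{k=1}^r (h_k/q) b_k$ would lie in $\langle b_1,\dots,b_r\rangle_{\mathbb{Q}} = \langle c_1,\dots,c_s\rangle_{\mathbb{Q}}$, contradicting the arrangement in Section~2.1 that $1 \notin \langle c_1,\dots,c_s\rangle_{\mathbb{Q}}$. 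Hence $\sigma$, and therefore $A$, is irrational.

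Applying Lemma~\ref{Weyl} to $P$ then yields $\bigl|\sum_{j=1}^m e(P(j))\bigr| = o_A(m) = o_h(m)$. The clause in Lemma~\ref{Weyl} that the $o$ term does not depend on the lower-order coefficients is essential here, because $B_1$ and $C_1$ depend on the specific copy $L$ of $\ell_m$, and uniformity in $\beta, \gamma$ is what will be needed when the corollary is later inserted into the Erd\H{o}s--Tur\'an--Koksma bound to handle all copies of $\ell_m$ simultaneously. The only substantive step in the argument is the irrationality check, which has been essentially built into the choice of basis in Section~2.1; I do not anticipate any genuine obstacle.
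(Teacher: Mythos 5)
Your proposal is correct and follows the paper's own route: the paper likewise observes that $\langle h,z_j\rangle$ is a quadratic polynomial in $j$ with irrational leading coefficient (a nonzero rational multiple of $\sum_{k=1}^r h_k a_k$, irrational precisely because $1\notin\langle c_1,\dots,c_s\rangle_{\mathbb{Q}}$) and then invokes Lemma~\ref{Weyl}, treating the corollary as immediate. You have simply spelled out the irrationality check, including the nonvanishing of $\sum_k h_k b_k$ from the basis property, which the paper leaves implicit.
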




We are now in a position to prove Lemma~\ref{disc}.

\begin{proof}[Proof of Lemma~\ref{disc}]
Choose $N>2C_rp^r$ and then, using Corollary~\ref{cor}, $m$ such that $$C_r\sum_{1 \leq ||h||_\infty\leq N} \frac{1}{c(h)} \left| \frac{1}{m} \sum_{j=1}^m e(\langle h,z_j\rangle)  \right|<\frac{1}{2p^r}.$$
The result then follows from Lemma~\ref{ETK}.
\end{proof}

\subsection*{Acknowledgements}
 D.C. was supported by NSF Awards DMS-2054452 and DMS-2348859. 
 J.F. was supported by the Austrian Science Fund (FWF) under the project W1230.
 The authors also thank Manuel Hauke for helpful conversations regarding equidistribution.

\printbibliography

\end{document}